\DeclareMathOperator{\codim}{codim}
\DeclareMathOperator{\hess}{Hess}
\DeclareMathOperator{\sign}{sign}
\newtheorem{theorem*}{Theorem}
\author{ Thomas O. Rot, Maciej Starostka, Nils Waterstraat}
\begin{document}
\subjclass[2010]{58E05, 55N45, 58C05}
\keywords{Morse cohomology, cup-product, critical points}

\title{The relative cup-length in local Morse cohomology}
\begin{abstract}
Local Morse cohomology associates cohomology groups to isolating neighborhoods of gradient flows of Morse functions on (generally non-compact) Riemannian manifolds $M$. We show that local Morse cohomology is a module over the cohomology of the isolating neighborhood, which allows us to define a cup-length  relative to the cohomology of the isolating neighborhood that gives a lower bound on the number of critical points of functions on $M$ that are not necessarily Morse. Finally, we illustrate by an example that this lower bound can indeed be stronger than the lower bound given by the absolute cup-length. 
\end{abstract}

\maketitle 
\section{Introduction}

Let $f$ be a Morse function on a closed manifold $M$. For a generic choice of a metric $g$ on $M$, one can define Morse cohomology, which is isomorphic to the singular cohomology of $M$, and consequently its isomorphism class does neither depend on $f$ nor $g$. The generators of the Morse complex are the critical points of $f$, and the ranks of the cohomology groups of $M$ give lower bounds on the minimal number of critical points of Morse functions on $M$. These lower bounds are captured by the celebrated Morse inequalities.

There are also bounds for the number of critical points of possibly degenerate functions. Via the Lyusternik-Schnirelman category, the cup-length gives a lower bound on the number of critical points for any function (not necessarily Morse) on $M$. The cup-length can be obtained by a well known Morse-theoretic interpretation of the cup product, and as the Morse cup length agrees with the cup length in the singular cohomology of $M$, it gives a lower bound for the number of critical points of any function on $M$.

On non-compact manifolds Morse cohomology in general is not even well-defined, as the boundary operator does not need to square to zero. Moreover, even if it is well-defined, the Morse cohomology will depend on the function and the metric used to define it. In fact any open manifold admits a function without critical points, cf.~\cite[Theorem 4.8]{Hi}, and thus the Morse cohomology of this function vanishes. Consequently, additional assumptions are needed to get interesting bounds for critical points on non-compact manifolds. Local Morse cohomology is a possible way to deal with this issue. Local Morse cohomology is defined for isolating neighborhoods of the gradient flow of a Morse function in possibly non-compact manifolds (see Section~\ref{sec:loc} for definitions). It only depends on the isolated homotopy class of the gradient flow of the Morse function, and is isomorphic to the cohomological Conley index, cf.~\cite[Theorem 7.1]{RV}.

Using a cup product for local Morse cohomology one also gets lower bounds for the number of critical points of functions whose gradient flows are isolated homotopic. However, the cup-length estimate obtained in this way may be very far from optimal. For example, let $f:\mC \mP^n\rightarrow \mR$ be a perfect Morse function on the complex projective space. Then all critical points of $f$ have even index. Define the function $F: \mC \mP^n \times \mR \to \mR$ by $F(x,y) = f(x) - y^2$ and let $N=\mC \mP^n\times [-1,1]$ be an isolating neighborhood of the gradient flow with respect to the obvious metric. All the critical points of $F$ now have odd index, because there is one extra unstable direction at the critical points. The local Morse cohomology is non-zero only in the odd dimensions $1,3,\ldots, 2n+1$. As the cup product maps two odd dimensional classes to an even dimensional class, the cup product is trivial in this case and thus the cup-length does not provide any information on the number of critical points.

However, a perturbation of $F$, whose gradient flow preserves isolation, will have at least $\CL(\mC \mP^n) + 1 = n+1$ critical points. This can be shown, e.g., by applying the relative cup-length for the Conley index as defined in \cite{KGUss}. The aim of this paper is to give a purely Morse cohomological explanation of the latter fact. We first define a Morse theoretic relative cup-length in Definition~\ref{def:relcup} and then we prove our main Theorem~\ref{thm:main}, which states that this relative cup-length gives a lower bound for the number of critical points of any (not necessarily Morse) function. In Section~\ref{sec:example} we explicitly compute the absolute and relative cup-length in Morse cohomology on $\mR\mP^n\times \mR$ to exemplify our approach.


\section{Local Morse cohomology}
\label{sec:loc}
The aim of this section is to recall the definition of local Morse cohomology, mostly to introduce our notation. Let $M$ be a smooth manifold which is not necessarily closed or orientable. Let $f:M\rightarrow \mR$ be a smooth function and $g$ a metric on $M$. Henceforth we denote the gradient flow generated by the vector field $-\nabla_gf$ by $\varphi$. We will always assume that the vector fields are complete, so that they define global flows. A subset $N\subset M$ is an \emph{isolating neighborhood} if it is compact and the \emph{invariant set}  $\mathrm{Inv}(\varphi,N)=\{x\in N\,|\, \varphi_t(x)\in N\text{ for all }t\in \mR\}$ is contained in the interior of $N$. The \emph{local stable manifold} $W^s(x;N)$ and \emph{local unstable manifold} $W^u(x;N)$ of a critical point $x\in N$ of $f$ are defined by
  \begin{align}\label{stableunstable}
  \begin{split}
    W^{s}(x;N)&=\{p\in \mathrm{Int}(N)\,|\, \varphi_t(p)\in N \quad \text{for all} \quad t\geq 0\quad
 \text{and} \lim_{t\rightarrow \infty} \varphi_t(p)=x\},\\
  W^{u}(x;N)&=\{p\in \mathrm{Int}(N)\,|\, \varphi_t(p)\in N\quad  \text{for all} \quad t\leq 0\quad
 \text{and} \lim_{t\rightarrow -\infty} \varphi_t(p)=x\}.
\end{split}
\end{align}
We say that the pair $(f,g)$ is \emph{Morse-Smale} on $N$ if all critical points of $f$ in $N$ are non-degenerate and their local stable and unstable manifolds \eqref{stableunstable} intersect transversely. Note that this is a generic condition. The (Morse) index of a non-degenerate critical point $x$ will be denoted by $\ind x$.

The local stable and unstable manifolds are contractible manifolds, and thus they are both orientable and coorientable, i.e.~their normal bundles are orientable. Let $\ko$ be a choice of orientation of the local unstable manifolds. We call the quadruple $\cQ=(N,f,g,\ko)$ a \emph{local Morse datum}. We denote the local stable and unstable manifolds of the local Morse datum by $W^{s}(x;\cQ)$, $W^{u}(x;\cQ)$ respectively. The orientations $\ko$ of the local unstable manifolds induce coorientations of the local stable manifolds as the normal space of $W^s(x;\cQ)$ at the critical point $x$ can be identified with the tangent space of $W^u(x;\cQ)$ at $x$, which is oriented by the choice of $\ko$. This implies that the moduli space of parametrized orbits $W(x,y;\cQ)=W^u(x;\cQ)\cap W^s(y;\cQ)$ is oriented, and similarly the space of unparameterized orbits $M(x,y;\cQ)=W(x,y;\cQ)/\mR$ is oriented, where $\mR$ acts on $M(x,y;\cQ)$ by the gradient flow. In the appendix we discuss our orientation conventions. If $\ind x=\ind y+1$, the moduli space $M(x,y;\cQ)$ is zero dimensional. We denote by $m_x^y=m_x^y(\cQ)$ the oriented count of the points in $M(x,y;\cQ)$. Associated to a local Morse datum is the \emph{local Morse complex} $CM_*(\cQ)$. The local Morse complex is freely generated by the critical points of $f$ in $N$, graded by the index, and the differential is defined by $\partial x=\sum m_x^yy$ where the sum runs over all critical points $y$ in $N$ with $\ind y=\ind x-1$. Standard arguments show that this is indeed a chain complex, cf.~\cite{R,RV}, and the \emph{local Morse homology} $HM_*(\cQ)$ is the homology of this complex.

Let $N$ be an isolating neighborhood of two flows $\varphi^\alpha$ and $\varphi^\beta$. If there exists a homotopy $\varphi^\lambda$, $\lambda\in [0,1]$ of flows between $\varphi^\alpha$ and $\varphi^\beta$, such that $N$ is an isolating neighborhood of all $\varphi^\lambda$ simultaneously, then the flows $\varphi^\alpha$ and $\varphi^\beta$ are said to be \emph{isolated homotopic}. If the gradient flows of two local Morse data $\cQ^\alpha$ and $\cQ^\beta$ are isolated homotopic, then there is a canonical isomorphism $HM_*(\cQ^\alpha)\cong HM_*(\cQ^\beta)$, cf.~\cite{R,RV}. The \emph{local Morse cohomology} $HM^*(\cQ)$ is defined as the homology of the dual complex $CM^*(\cQ):=\mathrm{Hom}(CM_*(\cQ),R)$ with (co)differential $\delta=\partial^*$, where $R$ is a unital commutative ring. We will surpress $R$ in the notation. If $x\in N$ is a critical point of $f$, i.e. a generator of $CM_*(\cQ)$, then we write $\eta^x$ for the dual generator in $CM^*(\cQ)$ defined by $\eta^x(y)=1$ if $x=y$ and $\eta^x(y)=0$ if $x\not =y$. The codifferential is then $\delta \eta^x=\sum m_y^x\eta^y$ where the sum runs over all critical points $y$ of $f$ in $N$ with $\ind y=\ind x+1$. If $\cQ^\alpha$ and $\cQ^\beta$ denote Morse data on different manifolds and $h^{\beta\alpha}:M^\alpha\rightarrow M^\beta$ is a map such that $h^{\beta\alpha}\bigr|_{W^u(x;\cQ^\alpha)}$ is transverse to $W^s(y;\cQ^\beta)$ for all critical points $x$ and $y$, then there is an induced map $h^{\beta\alpha}:H^*(\cQ^\beta)\rightarrow H^*(\cQ^\alpha)$ defined by $h^{\beta\alpha}(\eta^y)=\sum n_{h^{\beta\alpha}}(x,y)\eta^x$, where the sum runs over all critical points $x$ with $\ind x=\ind y$ and $n_{h^{\beta\alpha}}(x,y)$ is the oriented count of the zero dimensional manifold $W^{u}(x;\cQ^\alpha)\cap (h^{\beta\alpha})^{-1}W^s(y;\cQ^\beta)$. This definition has the expected functoriality and homotopy invariance properties. The situation in local Morse homology is a bit more subtle as it requires $h^{\beta\alpha}$ to behave properly with respect to the isolating neighborhoods and the flows. We refer to~\cite{RV2} for functoriality statements for local Morse cohomology, which will be used below.

\subsection{Cup product in Morse cohomology}

Before we define cup products in local Morse cohomology we recall their definition in Morse cohomology. We therefore assume for a moment that $M$ is closed and that $\cQ^\alpha,\cQ^\beta,\cQ^\gamma$ are \emph{global} Morse data on $M$ (i.e. we assume that $N^\alpha=N^\beta=N^\gamma=M$). The Morse theoretic cup product is a map $ \smallsmile:HM^k(\cQ^\alpha)\times HM^l(\cQ^\beta)\rightarrow HM^{k+l}(\cQ^\gamma)$ which is defined on the chain level via the formula
$\eta^{x}\smallsmile \eta^{y}=\sum w_z^{x,y} \eta^z$, where the sum runs over all critical points $z$ with $\ind{z}=\ind{{x}}+\ind{{y}}$ and where $w^{{x},{y}}_z:=w^{x,y}_z(\cQ^\gamma,\cQ^\alpha,\cQ^\beta)$ is the oriented count of the points in the oriented zero dimensional manifold \[W(z,x,y):=W(z,x,y,\cQ^\gamma,\cQ^\alpha,\cQ^\beta):=W^u(z;\cQ^\gamma)\cap W^s({x};\cQ^\alpha)\cap W^s({y};\cQ^\beta).\] Of course for this formula to be well defined the manifolds $W^u(z;\cQ^\gamma), W^s({x};\cQ^\alpha),$ and $W^s({y};\cQ^\beta)$ must intersect mutually transversally. By a perturbation this is in general possible, and one can take in fact $\cQ^\alpha=\cQ^\gamma$. However the remaining Morse datum $\cQ^\beta$ must be chosen different from $\cQ^\alpha=\cQ^\gamma$, if there is any chance to achieve transversality. The map $\smallsmile$ satisfies the relation
\begin{equation}
  \label{eq:cup}
\delta (\eta^{x}\smallsmile \eta^{y})-(\delta \eta^{x})\smallsmile \eta^{y}-(-1)^{\ind {x}}\eta^{x}\smallsmile \delta \eta^{y}=0
\end{equation}
which can be proven by compactifying the (non-compact) one-dimensional moduli space $W(z,x,y)$ for $\ind z=\ind {x}+\ind {y}+1$ to a one-dimensional manifold with boundary. The non-compactness of this moduli space comes from breaking, just as the non-compactness of $M({x},{y})$ comes from points escaping either $W^u({x})$ or $W^s({y})$. The non-compact ends of $W(z,x,y)$ occur when a sequence of points in $W(z,x,y)$ runs out of either $W^u(z;\cQ^\gamma)$,  $W^s({x};\cQ^\alpha)$ or $W^s({y};\cQ^\beta)$, see Figure \ref{fig:breaking}. 

The boundary components of the compactified moduli space are zero-dimensional and can be enumerated by
\[M(z,z') \times W(z',x,y),\quad M(x',x) \times W(z,x',y), \quad \text{and}\quad(-1)^{\ind {x}}M(y',y)\times W(z,x,y') ,\] for all possible critical points $x',y',z'$ such that $\ind{x'}=\ind{x}+1$, and $\ind{z'}=\ind z-1$. The oriented count of the boundary components must therefore be zero, which is exactly what Equation~\eqref{eq:cup} expresses.

\begin{figure}
  \includegraphics[width=.6\textwidth]{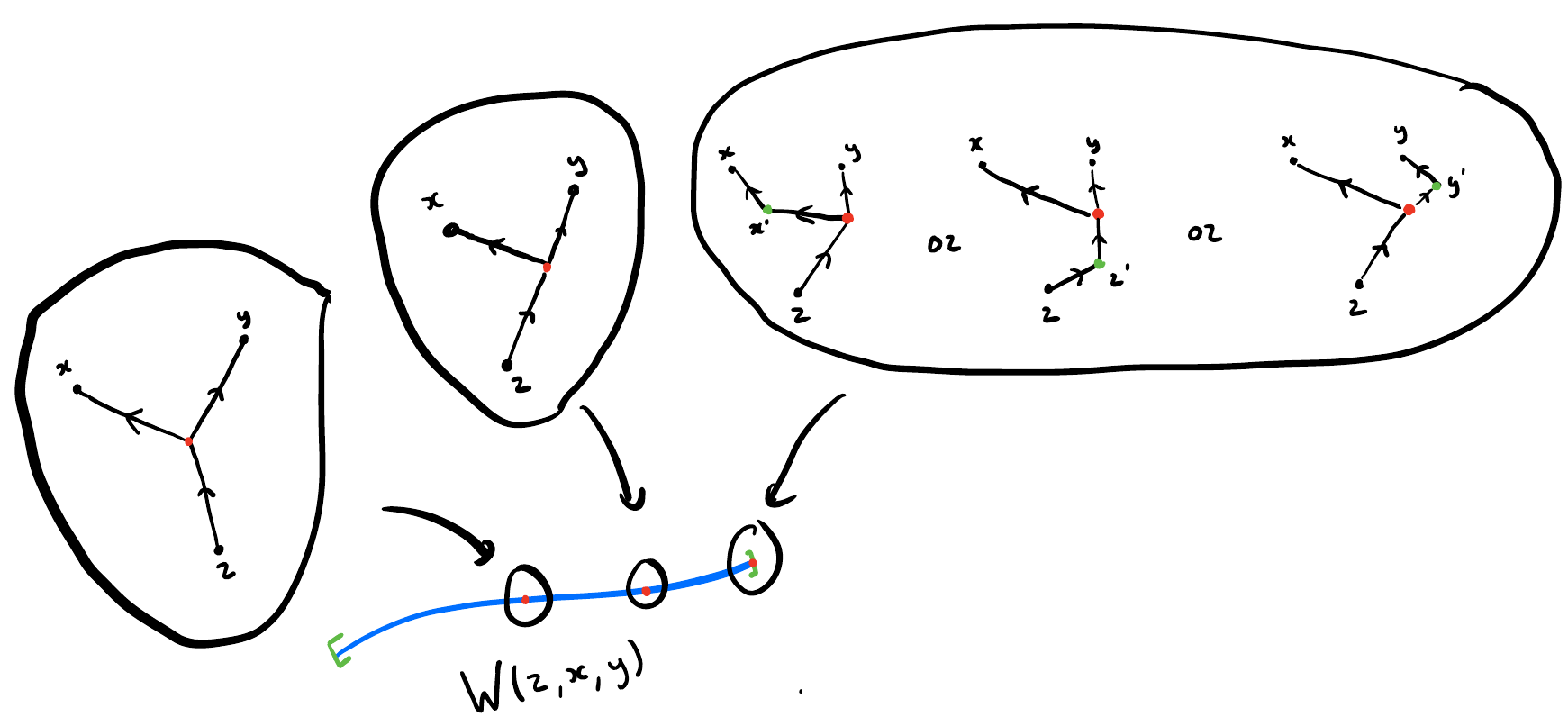}
  \caption{The one-dimensional moduli space $W(z,x,y)$ (in blue) is typically not compact. The non-compact ends are parametrized by the three types of breaking that can occur. Equation \eqref{eq:cup} expresses that the moduli space can be compactified to a one dimensional manifold with boundary by adjoining these possible breakings at the non-compact ends. }
  \label{fig:breaking}
\end{figure}
By our orientation conventions of the moduli space we immediately get that
\begin{equation}
  \label{eq:com}
\eta^{x}\smallsmile \eta^{y}=(-1)^{\ind {x}\ind {y}}\eta^{y}\smallsmile \eta^{x}
\end{equation}
Where on the left hand side we have the cup-product $\smallsmile:HM^*(\cQ^\alpha)\times HM^*(\cQ^\beta)\rightarrow HM^*(\cQ^\gamma)$ while on the right hand side we consider the cup-product $\smallsmile:HM^*(\cQ^\beta)\times HM^*(\cQ^\alpha)\rightarrow HM^*(\cQ^\gamma)$. In this sense the cup-product is graded commutative. The cup-product is also associative on the homology level, which is trickier to prove. As we do not need the associativity for our main result, we do not furnish further details about it. 
The Morse data $\cQ^\alpha$ and $\cQ^\beta$ give a Morse datum $\cQ^\alpha\times \cQ^\beta$ on the product manifold $M\times M$, where the Morse function $f$ of $\cQ^\alpha\times \cQ^\beta$ is given by $f(x,y)=f^\alpha(x)+f^\beta(y)$ and the metric is the product metric. By mapping a pair $\eta^{x}\in H^{k}(\cQ^\alpha)$ and $\eta^{y}\in H^{k}(\cQ^\beta)$ to the critical point $\eta^{(x,y)}$ of $f$ we obtain the Morse theoretic cross product $\times:H^k(\cQ^\alpha)\times H^{l}(\cQ^\beta)\rightarrow HM^{k+l}(\cQ^\alpha\times \cQ^\beta)$. We have the equality $
  \eta^{x}\smallsmile \eta^{y}=\Delta^*(\eta^x\times\eta^y),$
  where $\Delta:M\rightarrow M\times M$ denotes the diagonal, see also~\cite{AS, RV2}. 

  The cup product commutes with the canonical isomorphisms. Suppose that $\cQ^{\alpha'},\cQ^{\beta'},\cQ^{\gamma'}$ are other global Morse data on $M$. Then we have a commutative diagram

  \begin{equation}
    \label{eq:naturality}
    \begin{split}
    \xymatrix{HM^{k}(\cQ^\alpha)\times HM^{l}(\cQ^\beta)\ar[r]^-\times&HM^{k+l}(\cQ^\alpha\times \cQ^\beta)\ar[r]^-{\Delta^*}&HM^{k+l}(\cQ^\gamma)\\
      HM^{k}(\cQ^{\alpha'})\times HM^{l}(\cQ^{\beta'})\ar[u]\ar[r]^-\times&HM^{k+l}(\cQ^{\alpha'}\times \cQ^{\beta'})\ar[u]\ar[r]^-{\Delta^*}&HM^{k+l}(\cQ^{\gamma'}) \ar[u]}
  \end{split}
\end{equation}
The horizontal compositions are the cup products and the vertical arrows are the canonical isomorphisms induced by continuation. It follows that the cup product is natural with respect to maps, which can also be proven directly.

\subsection{Cup products in local Morse Cohomology}

The cup product in global Morse theory, i.e.~when the isolating neighborhood $N^{\alpha}=N^{\beta}=N^\gamma=M$ is a the whole closed manifold $M$, is well-defined under the assumption that the manifolds $W^u(z; \cQ^{\gamma}),W^s(x; \cQ^{\alpha}),$ and $ W^s(y;\cQ^{\beta})$ intersect transversely. For local Morse cohomology an additional isolation condition is needed that involves the flows $\varphi^{\alpha},\varphi^{\beta}$, and $\varphi^\gamma$ simultaneously.

\begin{definition}
Suppose $N$ is an isolating neighborhood of the flows $\varphi^{\alpha},\varphi^{\beta},\varphi^\gamma$. The flows are said to be \emph{isolation compatible} if the set
  \[
Z(\cQ^\gamma,\cQ^{\alpha},\cQ^{\beta})=\{p\in N\,|\, \varphi^{\alpha}_t(p),\varphi^{\beta}_t(p),\varphi^\gamma_{-t}(p)\in N\quad \text{for all} \quad t\geq 0\}
\]
is properly contained in the interior of $N$. Three local Morse data $\cQ^{\alpha},\cQ^{\beta}$ and $\cQ^\gamma$ are \emph{isolation compatible} if $N:=N^{\alpha}=N^{\beta}=N^\gamma$ and their gradient flows are isolation compatible. We call the local Morse data \emph{mutually transverse} if the manifolds $W^s(x;\cQ^{\alpha})$, $W^s(y;\cQ^{\beta}),$ and $W^u(z;\cQ^\gamma)$ intersect transversally. 
\end{definition}

The set \[
  Z:=Z(\cQ^\gamma,\cQ^{\alpha},\cQ^{\beta})=\bigcap_{t\geq 0}\left(\varphi^{\alpha}_{-t}(N)\cap\varphi^{\beta}_{-t}(N)\cap\varphi^\gamma_{t}(N)\right)
\]
is closed as it is the intersection of closed sets. Consequently, as $N$ is compact, $Z$ is compact. Let us note that that isolation compatibility is an open condition in the space of flows. 
Three flows are isolation compatible if the diagonal map $\Delta: M\rightarrow M\times M$ is an isolated map in the language of~\cite{RV2}. Here the domain is equipped with the flow $\varphi^\gamma$ and isolating neighborhood $N$ and the codomain is equipped with the flow $\varphi^{\alpha}\times \varphi^{\beta}$ with isolating neighborhood $N\times N$. 

\begin{lemma}
  \label{lem}
Suppose $N$ is an isolating neighborhood of flows  $\varphi^{\alpha},\varphi^{\beta}$, and $\varphi^\gamma$. Suppose that $\varphi^{\alpha}=\varphi^\gamma$ or $\varphi^{\beta}=\varphi^\gamma$. Then the flows are isolation compatible. 
\end{lemma}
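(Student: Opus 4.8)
The plan is to reduce everything to the definition of the full invariant set. By symmetry it suffices to treat the case $\varphi^{\alpha}=\varphi^\gamma$, since the case $\varphi^{\beta}=\varphi^\gamma$ is identical after exchanging the roles of $\alpha$ and $\beta$ in the conditions defining $Z$.

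First I would unwind the definition of $Z:=Z(\cQ^\gamma,\cQ^{\alpha},\cQ^{\beta})$. A point $p\in N$ lies in $Z$ precisely when $\varphi^{\alpha}_t(p)\in N$, $\varphi^{\beta}_t(p)\in N$, and $\varphi^\gamma_{-t}(p)\in N$ for all $t\geq 0$. Substituting $\varphi^\gamma=\varphi^{\alpha}$, the first and third conditions read $\varphi^{\alpha}_t(p)\in N$ for all $t\geq 0$ together with $\varphi^{\alpha}_{-t}(p)\in N$ for all $t\geq 0$; since the flows are complete and hence defined for all $t\in\mR$, these two one-sided conditions combine into $\varphi^{\alpha}_t(p)\in N$ for all $t\in\mR$, which is exactly the statement $p\in\mathrm{Inv}(\varphi^{\alpha},N)$. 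The remaining condition $\varphi^{\beta}_t(p)\in N$ for $t\geq 0$ can only shrink the set further, so I would conclude $Z\subseteq \mathrm{Inv}(\varphi^{\alpha},N)$.

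Finally, since $N$ is by hypothesis an isolating neighborhood of $\varphi^{\alpha}$, the defining property of an isolating neighborhood gives $\mathrm{Inv}(\varphi^{\alpha},N)\subset \mathrm{Int}(N)$. Chaining the two inclusions yields $Z\subset\mathrm{Int}(N)$, which is precisely the assertion that $Z$ is contained in the interior of $N$, i.e.\ that the flows are isolation compatible.

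The only genuine subtlety — and the step I would be most careful about — is the passage from the two separate one-sided invariance conditions (forward under $\varphi^{\alpha}$, backward under $\varphi^\gamma=\varphi^{\alpha}$) to membership in the full two-sided invariant set $\mathrm{Inv}(\varphi^{\alpha},N)$; this is where the completeness assumption on the vector fields is used. Beyond this bookkeeping the argument needs no compactification of moduli spaces and no transversality hypothesis, so I expect the proof to be short.
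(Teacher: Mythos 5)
Your argument is correct and is essentially the paper's own proof: both reduce to the case $\varphi^{\alpha}=\varphi^{\gamma}$ and observe that $Z(\cQ^{\alpha},\cQ^{\alpha},\cQ^{\beta})\subset \mathrm{Inv}(\varphi^{\alpha},N)\subset\mathrm{Int}(N)$. You simply spell out the unwinding of the definition of $Z$ that the paper leaves implicit.
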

\begin{proof}
  Without loss of generality, we can assume $\varphi^{\alpha}=\varphi^\gamma$. Thus $Z(\cQ^{\alpha}, \cQ^{\alpha}, \cQ^{\beta})\subset \mathrm{Inv}(\varphi^{\alpha};N)$ which is contained in the interior of $N$ as $N$ is an isolating neighborhood of the flow $\varphi^\alpha$.
\end{proof}

\begin{proposition}
  \label{prop:all}
  Suppose that the local Morse data $\cQ^{\alpha},\cQ^{\beta}$ and $\cQ^\gamma$ are isolation compatible and mutually transverse. Then the cup-product $\eta^x\smallsmile \eta^y=\sum_{\ind{z}=\ind{x}+\ind{y}}w_z^{x,y}\eta^z$ is well defined on the chain level and satisfies Equation \eqref{eq:cup}. Thus it descends to a map $\smallsmile:HM^k(\cQ^{\alpha})\times HM^l(\cQ^{\beta})\rightarrow HM^{k+l}(\cQ^\gamma)$. The cup product is graded commutative in the sense of Equation \eqref{eq:com}. Moreover if $\cQ^{\alpha'},\cQ^{\beta'}$ and $\cQ^{\gamma'}$ are other isolation compatible local Morse data on the same isolating neighborhood such that the flows $\varphi^{\alpha'},\varphi^{\beta'}$ and $\varphi^{\gamma'}$ are isolated homotopic to $\varphi^{\alpha},\varphi^{\beta}$ and $\varphi^{\gamma}$ respectively, then the diagram
  \begin{equation}
    \label{eq:naturality2}
    \begin{split}
    \xymatrix{HM^{k}(\cQ^{\alpha})\times HM^{l}(\cQ^{\beta})\ar[r]^-\smallsmile&HM^{k+l}(\cQ^\gamma)\\
      HM^{k}(\cQ^{\alpha'})\times HM^{l}(\cQ^{\beta'})\ar[u]\ar[r]^-\smallsmile&HM^{k+l}(\cQ^{\gamma'}) \ar[u]}
  \end{split}
\end{equation}
is commutative, where the vertical maps are induced by continuation.
\end{proposition}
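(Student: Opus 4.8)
The plan is to run the template of the global construction recalled above, the one genuinely new analytic input being that isolation compatibility is exactly the hypothesis that controls the non-compact ends of all moduli spaces involved. First I would verify that the structure coefficients are well defined. Every point of $W(z,x,y)=W^u(z;\cQ^\gamma)\cap W^s(x;\cQ^\alpha)\cap W^s(y;\cQ^\beta)$ satisfies $\varphi^\alpha_t(p),\varphi^\beta_t(p),\varphi^\gamma_{-t}(p)\in N$ for all $t\geq 0$ by the definitions \eqref{stableunstable}, so $W(z,x,y)\subset Z(\cQ^\gamma,\cQ^\alpha,\cQ^\beta)$. Since $Z$ is compact and properly contained in $\mathrm{Int}(N)$, the moduli space has compact closure lying in the interior, away from $\partial N$. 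Mutual transversality makes $W(z,x,y)$ a manifold of dimension $\ind z-\ind x-\ind y$; when $\ind z=\ind x+\ind y$ it is a compact oriented zero-manifold, hence a finite signed point set, and $w_z^{x,y}$ is its count. Thus the chain-level formula is well defined.

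Next I would prove Equation \eqref{eq:cup}. For $\ind z=\ind x+\ind y+1$ the space $W(z,x,y)$ is a one-manifold, again contained in the compact set $Z\subset\mathrm{Int}(N)$, which I would compactify to a one-manifold with boundary. The decisive point, and the reason isolation compatibility is the correct hypothesis, is that a sequence in $W(z,x,y)$ cannot escape through $\partial N$: every limiting configuration still lies in $Z$, hence in the interior, so the only source of non-compactness is breaking of trajectories. These are precisely the three strata $M(z,z')\times W(z',x,y)$, $M(x',x)\times W(z,x',y)$, and $(-1)^{\ind x}M(y',y)\times W(z,x,y')$ identified in the global discussion, and a standard gluing argument realizes them as the boundary. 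As the oriented count of the boundary of a compact one-manifold is zero, matching these three boundary types against the terms of the codifferential yields \eqref{eq:cup} verbatim. Descent to cohomology is then formal, since \eqref{eq:cup} is the Leibniz identity $\delta(\eta^x\smallsmile\eta^y)=(\delta\eta^x)\smallsmile\eta^y+(-1)^{\ind x}\eta^x\smallsmile(\delta\eta^y)$: cocycles cup to cocycles, and a cocycle cupped with a coboundary is a coboundary.

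Graded commutativity \eqref{eq:com} follows, exactly as in the closed case, by comparing the orientation of $W(z,x,y)$ with that of $W(z,y,x)$ under the interchange of the two stable factors, which by our orientation conventions introduces the sign $(-1)^{\ind x\ind y}$; here one notes that isolation compatibility of $(\cQ^\gamma,\cQ^\alpha,\cQ^\beta)$ is manifestly equivalent to that of $(\cQ^\gamma,\cQ^\beta,\cQ^\alpha)$, so both products are defined on the same data.

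The main obstacle is the naturality square \eqref{eq:naturality2}. Here I would exploit the observation already recorded, that isolation compatibility of $\cQ^\alpha,\cQ^\beta,\cQ^\gamma$ is precisely the statement that the diagonal $\Delta\colon M\to M\times M$ is an isolated map in the sense of \cite{RV2}, with the flow $\varphi^\gamma$ on $N$ in the domain and $\varphi^\alpha\times\varphi^\beta$ on $N\times N$ in the codomain. Factoring $\eta^x\smallsmile\eta^y=\Delta^*(\eta^x\times\eta^y)$, naturality reduces to two pieces: naturality of the cross product under the product continuation $\cQ^{\alpha}\times\cQ^\beta\rightsquigarrow\cQ^{\alpha'}\times\cQ^{\beta'}$, which is essentially a diagonal identification on generators, and naturality of $\Delta^*$ with respect to continuation, which is an instance of the functoriality and homotopy-invariance statements for induced maps in \cite{RV2}. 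The delicate part is that the continuation must respect isolation throughout the homotopy. Since the flows are isolated homotopic and isolation compatibility is an open condition, I would cover the parameter interval by finitely many subintervals on each of which a single isolated-map continuation is defined, and compose the resulting commuting squares. Constructing the parametrized triple-intersection moduli spaces that realize these chain homotopies, while checking that their ends stay interior to $N$ for all parameter values, is the technical heart of the argument and the step I expect to demand the most care.
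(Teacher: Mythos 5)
Your proposal is correct and follows essentially the same route as the paper's proof: you control the non-compactness of $W(z,x,y)$ by observing that it is contained in the compact set $Z\subset\mathrm{Int}(N)$, so that all breaking occurs inside $N$ and the boundary strata are the three broken configurations, and you obtain the naturality square by reducing to the local version of the cross-product/diagonal factorization and the isolated-map functoriality of \cite{RV2}. The only cosmetic difference is that you spell out the zero-dimensional compactness and the subdivision of the continuation parameter in slightly more detail than the paper does.
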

\begin{proof}
  In the local case Equation \eqref{eq:cup} is derived, as in the global case, by investigation of the non-compactness of the moduli space $W(z,x,y)$. The issue in the local situation is that a priori the breaking  could occur outside of $N$. Note that $Z=\bigcup_{x',y',z'}W(z',x',y')$ and that $Z$ is compact. Hence if $p_n$ is a sequence in $W(z,x,y)$, then there is a subsequence converging to $p\in W(z',x',y')$ for some critical points $x',y',z'$. Let us consider this subsequence. By the transversality assumption, $\ind{z'}\leq \ind z$, $\ind {x'}\geq \ind x$, $\ind{y'}\geq \ind y$ with at most one of the inequalities strict. If an inequality is strict e.g.~$\ind{z'}<\ind z$, then the index difference $\ind z-\ind{z'}$ is one. If the inequality is not strict, e.g.~$\ind{z'}=\ind z$, then these are the same critical points, i.e.~$z'=z$. Now assume that $\ind{z'}=\ind z -1$ (the cases $\ind{ x'}=\ind x+1$ and $\ind{y'}=\ind y+1$ can be treated in a similar way). Then $\varphi^\gamma_t(p_n)\in W^u(z;\cQ^\gamma)$ for all $n$ and $t<0$, while $\varphi^\gamma_t(p)\in W^u(z';\cQ^\gamma)$. Then there exist times $t_n<0$ such that $\lim_{n\rightarrow \infty} \varphi^\gamma_{t_n}(p_n)\in W^u(z;\cQ^\gamma)\cap W^s(z';\cQ^\gamma)$. Thus in this case the non-compact end of $W(z;x,y)$ is enumerated by $M(z,z')\times W(z',x,y)$.

The commutativity of the diagram in the proposition follows from the local version of Diagram ~\eqref{eq:naturality}. This diagram is well defined and commutative if the map $\Delta$ is isolated throughout the isolated homotopies of flows, cf.~\cite{RV2}, which holds by the assumptions of the proposition.
\end{proof}

\section{The relative cup length in local Morse cohomology}

To define the relative cup-length, we need a special class of local Morse data.

\begin{definition}
  A local Morse datum $\cQ=(f,g,N,\ko)$ is \emph{attracting} if there exists $c\in \mR$ such that
  \[
    f(x)\begin{cases} <c\qquad \text{if} \qquad &x\in \mathrm{Int} N\\
      =c& x\in \partial N\\
      >c&x\in M\setminus N.
      \end{cases}
  \]
\end{definition}

All attracting local Morse data are isolated homotopic to each other, hence their local Morse homologies are isomorphic. The Morse cohomology of an attracting local Morse datum $\cQ=(f,g,N,\ko)$ is isomorphic to the singular cohomology of $N$, i.e.~$HM^*(\cQ)\cong H^*(N;R)$, see for example~\cite{KM,RV}. 

Let $\cQ^\alpha$ be a local Morse datum, and $\cQ^{\beta}$ be an attracting local Morse datum such that $\cQ^\alpha,\cQ^\beta,\cQ^\alpha$ are mutually transverse. By Lemma~\ref{lem} the flows are isolation compatible. Thus we have a well defined cup product $\smallsmile:HM^*(\cQ^\alpha)\times HM^*(\cQ^{\beta})\rightarrow HM^*(\cQ^{\alpha})$.



\begin{definition}
  \label{def:relcup}
  Let $\cQ^\alpha$ be a local Morse datum, and $\cQ^{\beta}$ an attracting local Morse data such that $\cQ^\alpha,\cQ^{\beta},\cQ^\alpha$ are mutually transverse.
  The \emph{relative cup-length} $Y(\cQ^\alpha)$ of $\cQ^\alpha$ is defined to be 
  \begin{itemize}
  \item $Y(\cQ^\alpha)=0$ if $HM^*(\cQ^\alpha)=0$.
  \item $Y(\cQ^\alpha) =1$ if $HM^*(\cQ^\alpha)\not =0$ but ${\eta\smallsmile \mu_1=0}$ for all $\eta\in HM^*(\cQ^\alpha)$ and ${\mu_1\in HM^{l_1}(\cQ^{\beta})}$ with $l_1>0$.
  \item $Y(\cQ^\alpha)=n \geq 2$ if $n$ is the maximum number such that there exist classes ${\eta \in HM^k(\cQ^\alpha)}$ and ${\mu_i\in HM^{l_i}(\cQ^\beta)}$ with $l_i>0$, for $1\leq i\leq n-1$ for which
    \[(\ldots((\eta\smallsmile \mu_1)\smallsmile \mu_2)\ldots \smallsmile\mu_{n-1})\not=0.\] 
  \end{itemize}
\end{definition}

Note that the relative cup-length does not depend on the choice of the attracting local Morse datum by Proposition~\ref{prop:all}.

Now let $f$ be a function, $g$ a metric and $N$ an isolating neighborhood for the gradient flow $\varphi$ of $(f,g)$. We define the relative cup-length $Y(f,g,N)$ to be the relative cup-length $Y(\cQ^\alpha)$ where $\cQ^\alpha=(f^\alpha,g^\alpha,N,\ko^\alpha)$ is any local Morse datum whose gradient flow is isolated homotopic to the flow $\varphi$. Standard continuation arguments show that this does not depend on the choice of the local Morse datum. The proof of the following proposition closely follows~\cite{AH}.

\begin{theorem}
  \label{thm:main}
Let $f$ be a function, $g$ a metric and $N$ an isolating neighborhood for the gradient flow $\varphi$ of with respect to $g$. Then $f$ has at least $n=Y(f,g,N)$ critical points in $N$. 
\end{theorem}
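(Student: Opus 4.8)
The plan is to run a Lusternik--Schnirelmann minimax argument relative to the base $N$, using $f$ itself (not a Morse approximation) as a Lyapunov function for its own negative gradient flow $\varphi$. First I would dispose of the trivial cases: if the critical set of $f$ in $\mathrm{Inv}(\varphi,N)$ is not finite, there are infinitely many critical points and the bound is immediate, so I may assume there are finitely many isolated critical points $p_1,\dots,p_m$ (note that any critical point in $N$ is a rest point, hence lies in $\mathrm{Inv}(\varphi,N)\subset\mathrm{Int}(N)$), and I must show $m\ge n$. Since $HM^*(\cQ^\alpha)$ is the cohomological Conley index of $(\varphi,N)$ and depends only on the isolated homotopy class, $Y(f,g,N)$ may be computed directly from $\varphi$. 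Because $f$ strictly decreases along $\varphi$ off the critical set, each sublevel set $N_a:=N\cap\{f\le a\}$ is positively invariant, its maximal invariant set $S_a$ is an attractor inside $\mathrm{Inv}(\varphi,N)$, and the inclusions of sub-attractors induce restriction maps $\rho_a\colon HM^*(\cQ^\alpha)\to CH^*(S_a)$ that are compatible as $a$ varies. For a nonzero class $\xi$ I set $c(\xi)=\inf\{a:\rho_a(\xi)\neq0\}$; a standard deformation argument then shows $c(\xi)$ is finite and is a critical value of $f$ in $N$.

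The module structure enters through multiplicativity of restriction. Fixing the attracting datum $\cQ^\beta$ and identifying $HM^*(\cQ^\beta)\cong H^*(N)$, the maps $\rho_a$ respect the $H^*(N)$-action, so $\rho_a(\xi\smallsmile\mu)=\rho_a(\xi)\smallsmile\rho_a(\mu)$ and in particular $\rho_a(\xi)=0$ forces $\rho_a(\xi\smallsmile\mu)=0$. Hence $c(\xi\smallsmile\mu)\ge c(\xi)$ for every $\mu$, and applied to a product $\eta\smallsmile\mu_1\smallsmile\cdots\smallsmile\mu_{n-1}\neq0$ realizing $Y(\cQ^\alpha)=n$ this yields a chain
\[ c(\eta)\le c(\eta\smallsmile\mu_1)\le\cdots\le c(\eta\smallsmile\mu_1\smallsmile\cdots\smallsmile\mu_{n-1}) \]
of $n$ critical values. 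The crux is the usual dichotomy at each step: if two consecutive values coincide, say $c(\xi)=c(\xi\smallsmile\mu)$ with $\deg\mu>0$ at a level $c$, then a flow deformation inside $N$ cannot push $N_{c+\varepsilon}$ into $N_{c-\varepsilon}$, yet it could if $\mu$ restricted trivially to a neighborhood of the critical set at level $c$. Since $\mu$ has positive degree it vanishes on any contractible subset of $N$, and an isolated critical point has contractible (ball) neighborhoods; thus a single critical point at level $c$ would force the strict increase $c(\xi\smallsmile\mu)>c(\xi)$. As in \cite{AH} I would package this by introducing a relative category $\mathrm{cat}_N$, proving $Y(\cQ^\alpha)\le\mathrm{cat}_N$ from the cup-length estimate above and $\mathrm{cat}_N\le m$ by covering $\mathrm{Inv}(\varphi,N)$ with categorical neighborhoods of the $p_i$ together with the descending flow, so that any coincidences in the chain are absorbed into additional critical points and $m\ge n$ results.

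The main obstacle is the deformation lemma in this local, possibly non-compact setting. For an interval $[a,b]$ free of critical points I must construct a flow-deformation carrying $N_b$ into $N_a$ that remains inside $N$, so that isolation is preserved and the filtration of the Conley index jumps only at critical values; away from $\partial N$ this is the standard gradient retraction, but near $\partial N$ one must exploit the isolating property of $N$ to confine the deformation, which is precisely where the hypotheses of Section~\ref{sec:loc} are used. The second delicate point is that $f$ need not be Morse, so the critical set at a given level may be a degenerate isolated point rather than a non-degenerate one; I handle this through the local Conley index of each $p_i$, the decisive fact being the elementary vanishing of a positive-degree class of $H^*(N)$ on a contractible neighborhood, which is robust under degeneracy. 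Granting these two lemmas, the chain of $n$ critical values together with the relative category count completes the proof.
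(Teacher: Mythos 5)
Your proposal takes a genuinely different route from the paper --- a Lusternik--Schnirelmann minimax scheme via sublevel filtrations and a module structure on the Conley index --- and in its present form it has real gaps. The first is foundational: the claim that $N_a=N\cap\{f\le a\}$ is positively invariant is false in this local setting. $N$ is only an isolating neighborhood, so trajectories of $\varphi$ may exit through $\partial N$, and with them $N_a$; the entire filtration $\{N_a\}$, the attractors $S_a$, the restriction maps $\rho_a$ and the deformation lemma all have to be replaced by index pairs and an index filtration adapted to the Morse decomposition by critical levels. You flag the boundary issue as a ``delicate point,'' but it is not a refinement of a working argument --- without it there is no argument. The second gap is the asserted multiplicativity $\rho_a(\xi\smallsmile\mu)=\rho_a(\xi)\smallsmile\rho_a(\mu)$: a cup product (or $H^*(N)$-module structure) on the cohomological Conley index compatible with attractor restrictions is precisely the technical core of the Conley-index approach of Dzedzej--G\c{e}ba--Uss cited in the introduction, and it is not available for free from the material of Section~\ref{sec:loc}. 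In effect you are reproving the theorem by the route the paper explicitly set out to avoid, while leaving its two hardest ingredients unconstructed.

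The paper's proof is considerably more elementary and stays entirely inside Morse cohomology. One approximates $f$ by Morse functions $f^{\alpha_j}\to f$ whose gradient flows are isolated homotopic to $\varphi$. Nonvanishing of the $n$-fold iterated cup product with classes from the attracting datum $\cQ^\beta$ is witnessed on the chain level by critical points $x_1^j,\dots,x_n^j$ of $f^{\alpha_j}$ together with intermediate points $z_i^j\in W(x_{i+1}^j,y_i^j,x_i^j)$ satisfying the strict interlacing $f^{\alpha_j}(x_1^j)<f^{\alpha_j}(z_1^j)<\dots<f^{\alpha_j}(x_n^j)$. Compactness of $N$ lets one pass to limits; the only subtlety is that the inequalities could degenerate, and this is excluded by arranging (using finiteness of $\crit f$) that no critical point of $f$ lies in $\overline{W^s(y;\cQ^\beta)}$ for $\ind y\ge 1$, so each limit $z_i$ is non-critical and flowing slightly forwards and backwards restores strictness. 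No deformation lemma, no index filtration, and no product on the Conley index are needed. If you want to salvage your approach, you should either import the full machinery of the Conley-index cup-length (in which case the result is essentially that of the cited reference), or switch to the limiting argument above, which is the content of the paper's proof following Albers--Hein.
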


\begin{proof}
  As the claim is trivial for $n=0$, we henceforth assume that $n>0$. Take a sequence of local Morse data $\cQ^{\alpha_j}=(f^{\alpha_j},g^{\alpha_j},N,\ko^{\alpha_j})$, $j\in\mN$, such that the gradient flow of each $\cQ^{\alpha_j}$ is isolated homotopic to the gradient flow $\varphi$, and such that $f^{\alpha_j}\rightarrow f$ and $g^{\alpha_j}\rightarrow g$ in the $C^\infty$ topology.
  Let us first consider the case $n=1$, i.e., we need to show that $f$ has a critical point. As $HM^*(\cQ^{\alpha_j})\not=0$ for each $\alpha_j$, there exists a critical point $x_j\in \crit f^{\alpha_j}$. As each $x_j\in N$ and $N$ is compact, there exists a convergent subsequence of $x_j$ that converges to $x\in N$. As $f^{\alpha_j}$ converges to $f$, it follows that $x$ is a critical point of $f$.
  
Now suppose that $n\geq 2$. 
Let $\cQ^\beta$ be an attractive local Morse datum which is transverse to each $\cQ^{\alpha_j}$. Without loss of generality we may assume that $f$ has only finitely many critical points in $N$, as otherwise the claimed inequality obviously holds. Then we may assume that $\crit f^{\alpha}\cap \overline{W^s(y;\cQ^{\beta})}=0$ for all critical points $y$ of $f^{\beta}$ of index $\ind{y}\geq 1$. As the relative cup-length is $n\geq 2$, we have, for every $j$ and $1\leq i\leq n-1$, critical points $x_1^j,\ldots, x_n^j \in \crit f^{\alpha_j}$, critical points $y_i^j\in \crit f^{\beta}$ and points $z_i^j\in W(x^j_{i+1},y^j_i,x^j_i)$, such that
  \[
    f^{\alpha_j}(x_1^j)< f^{\alpha_j}(z_1^j)< f^{\alpha_j}(x_2^{j})< f^{\alpha_j}(z_2^j)<\ldots <f^{\alpha_j}(x_{n-1}^{j})< f^{\alpha_j}(z_{n-1}^j)< f^{\alpha_j}(x_n^{j}),
  \]
  see Figure~\ref{fig:cup} for the case $n=3$.  By passing to subsequences we may assume that each $x_i^j$ and $z^j_i$ converges in $N$ as $j\rightarrow \infty$ to points $x_i$ and $z_i$ respectively. In the limit we have a priori that
  \begin{equation}
    \label{eq:ineq}
    f(x_1)\leq f(z_1) \leq f(x_2) \leq f(z_2) \leq\ldots \leq f(x_{n-1}) \leq f(z_{n-1})\leq f(x_n).
\end{equation}
Note that all the $x_i$'s must be critical points of $f$, as the $x_i^j$ are critical points of $f^{\alpha_j}$ and $f^{\alpha_j}$ converges to $f$. We now argue that all the inequalities in Equation \eqref{eq:ineq} are strict, which means that the $x_i$'s are different critical points. By the assumption $\crit f^{\alpha}\cap \overline{W^s(x;\cQ^{\beta})}=0$, we see that the points $z_i$ are not critical points of $f$. But then for $t$ sufficiently small, $\varphi^{\alpha_j}_t(z^j_i)$ converges to $\varphi_t(z_i)$ and $\varphi^{\alpha_j}_{-t}(z^j_i)$ converges to $\varphi_{-t}(z_i)$. Thus
$f(x_i)\leq f(\varphi_t(z_i))<f(z_i)<f(\varphi_{-t}(z_i)\leq f(x_{i+1})$ for all $1\leq i\leq n-1$, which means that indeed all the inequalities in \eqref{eq:ineq} are strict. Consequently, $f$ has $n=Y(f,g,N)$ different critical points in $N$. 
\end{proof}

\begin{figure}
  \includegraphics[width=.3\textwidth]{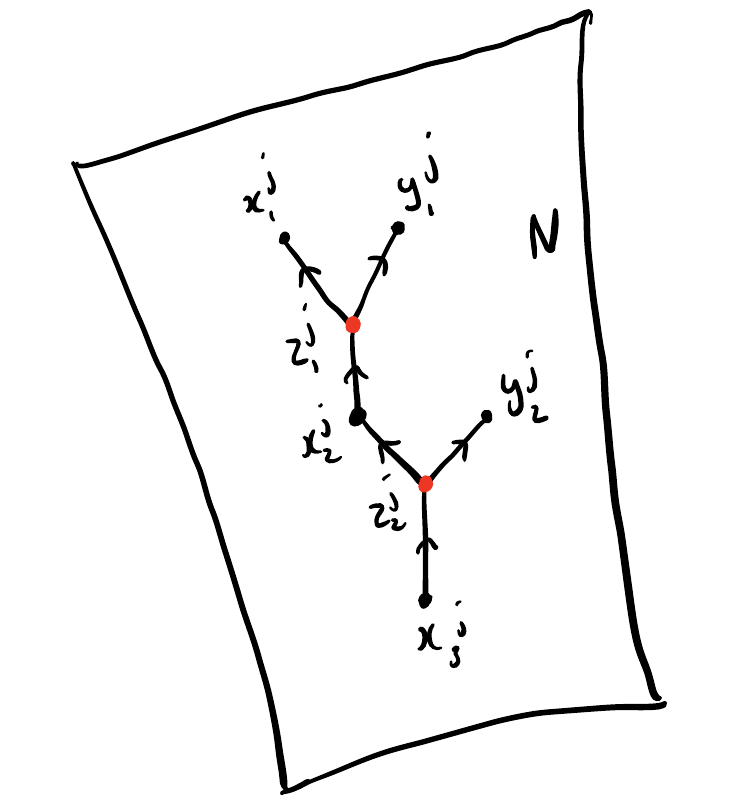}
  \caption{A sketch of the main idea of the proof of Theorem \ref{thm:main}. A non-zero relative cup length for every $j$ means that there exist non-vacuous diagrams in $N$ as in the figure (here for $n=3$). The points $x_1^j$ up to $x_n^j$ converge to (different) critical points $x_1,\ldots x_n$ of $f^\alpha$ as $j\rightarrow \infty$. }
  \label{fig:cup}
\end{figure}

\begin{remark}
It is possible to define different relative cup-lengths by changing the attracting local Morse data to other classes of Morse data. However these different cup-lengths do not yield the existence of more critical points, which can be seen as follows. Instead of taking $\cQ^{\beta}$ in Definition~\ref{def:relcup} to be an \emph{attracting} Morse data, let us consider any local Morse datum $\cQ^{\gamma}$, such that $\cQ^\alpha,\cQ^\gamma, \cQ^\alpha$ is isolation compatible. Denote by $Y$ the relative cup-length as in Definition~\ref{def:relcup}, and by $Y'$ the relative cup-length where $\cQ^{\beta}$ is changed to $\cQ^{\gamma}$ in Definition~\ref{def:relcup}. Then Theorem~\ref{thm:main} also holds for the relative cup-length $Y'$, i.e.~$Y'$ gives a lower bound on the number of critical points. However, $Y'\leq Y$, and thus this does not yield the existence of further critical points. To see the claimed inequality we observe that we have, under suitable genericity conditions, a commutative diagram
\begin{equation}
    \label{eq:ugly}
    \begin{split}
    \xymatrix{HM^{k}(\cQ^{\alpha})\times HM^{l_1}(\cQ^{\gamma})\times\ldots\times HM^{l_{Y'}}(\cQ^{\gamma})\ar[d]\ar[r]&HM^{k+l}(\cQ^{\alpha})\\
HM^{k}(\cQ^{\alpha})\times HM^{l_1}(\cQ^{\beta})\times\ldots\times HM^{l_{Y'}}(\cQ^\beta)\ar[ru]&
}
\end{split}
\end{equation}
In this diagram the horizontal and diagonal maps are the repeated cup products and the vertical map is induced by the identity. As $\cQ^\beta$ is attracting, the identity mapping is isolated~\cite{RV} (seen as a map $\mathrm{id}^{\gamma\beta}:\cQ^\beta\rightarrow \cQ^\gamma$), which means that the vertical map is well defined in cohomology.  To see that the diagram commutes it suffices to expand the cup-products as in Diagram~\eqref{eq:naturality}. The horizontal map in Diagram~\eqref{eq:ugly} is zero if the diagonal map is zero. This implies that $Y'\leq Y$. 
\end{remark}


\section{Example}

\label{sec:example}
In this section we provide an explicit example of the computation of the absolute and relative cup-length of Morse functions on $\mR\mP^n\times \mR$ and show that the relative cup length gives better lower bounds. We take as coefficients $\mZ/2\mZ$, although it would not be too much more effort to do these computations over $\mZ$. As the orientation $\ko$ is canonical over $\mZ/2\mZ$, we simplify our notation in this section by ignoring $\ko$. 

Let $R$ be a symmetric $(n+1)\times (n+1)$ matrix whose eigenvalues are all distinct. Denote these eigenvalues by $\lambda_0<\ldots <\lambda_n$ and by $p_0,\ldots ,p_n$ corresponding eigenvectors of norm one. Equip $S^n\subset \mR^{n+1}$ with the standard metric and define a function $f:S^n\rightarrow \mR$ by $f(x)=\frac{1}{2}\langle x,Rx\rangle$. Let us compute the gradient of $f$ and its flow. We have that
\[
  df(x)y=\frac{1}{2}(\langle y,R x\rangle+\langle x,R y\rangle)=\langle Rx,y\rangle
\]
as $R$ is symmetric. The gradient of $f$ on $\mR^{n+1}$ is therefore $\nabla^{\mR^{n+1}} f=Rx$, and the gradient of $f$ on $S^n$ is the projection of this gradient to the tangent bundle of $S^n$. Thus
\[\nabla f:=\nabla^{S^n} f(x)=Rx-\langle Rx,x\rangle x=(R-2f(x)) x.\]
We see that $x\in S^n$ is a critical point for $f$ if and only if $x$ is an eigenvector of norm one of $R$ with eigenvalue $2f(x)$, see Fig~\ref{fig:sphere}. We now show that $f$ is Morse. The Hessian of $f$ at a critical point $x$, seen as an operator $T_xS^n\rightarrow T_xS^n$, is given by $X\mapsto \nabla_X \nabla f$. But $\nabla_X \nabla f$ is the projection of the directional derivative (in $\mR^{n+1}$) of $\nabla f$ in the direction $X$ to the tangent space of $S^n$. We therefore see that
\[
  \nabla_X \nabla f=RX-(2df(x)X)x-2f(x)X=RX-2f(x)X
\]
Thus $\hess f(x)=R-2f(x)I$, where $I$ denotes the identity operator on $T_xS^n$. Let $k$ be such that $x=p_k$ or $x=-p_k$. The tangent space $T_xS^n$ is orthogonal to the eigenspace of $R$ spanned by $p_k$. Hence $\hess f(x)$ is invertible on $T_xS^n$ and the eigenvalues of $\hess f(x)$ are $\lambda_l-\lambda_k$ for $0\leq l\not=k\leq n$. The negative eigenvalues of $\hess f$ are therefore $\lambda_0-\lambda_k,\ldots, \lambda_{k-1}-\lambda_k$. Thus the critical point $x$ is non-degenerate and the Morse index is $k$. As this holds for all critical points, $f$ is Morse.

It is readily seen that the negative gradient flow of $\nabla f$ is given by
\[
\varphi_t(x)=\frac{\exp(-t R) x}{\norm{ \exp(-tR) x}}.
\]
This flow is the projection of the flow of $-Rx$ on $\mR^{n+1}\setminus\{0\}$. To study the long time behaviour of $\varphi$, let $x\in S^n$ and write $x=\sum_{i=0}^n a_i p_i$ in terms of the basis of eigenvectors. Then $\exp(-t R) x=\sum_{i=0}^n a_ie^{-t\lambda_i}p_i$. Let $k$ be the smallest number such that $a_i=0$ for all $i<k$. Then
\[
\lim_{t\rightarrow \infty}\varphi_t(x)=\lim_{t\rightarrow \infty}\frac{\sum_{i=k}^na_ie^{-(\lambda_i-\lambda_k)t}p_i}{\norm{\sum_{i=k}^na_ie^{-(\lambda_i-\lambda_k)t}p_i}}=\frac{a_k}{\vert a_k\vert}p_k=\sign(a_k) p_k.
\]
Similarly, if $l$ denotes the largest number such that $a_i=0$ for all $i>l$ then $\lim_{t\rightarrow -\infty}\varphi_t(x)=\sign (a_l) p_l$. Thus the stable and unstable manifolds are
\begin{align*}
    W^{u}(\pm p_k)&=\{a_0p_0+\ldots+a_k(\pm p_k)\,|\, a_0^2+\ldots +a_k^2=1, a_k>0\}\\
    W^{s}(\pm p_k)&=\{a_k(\pm p_k)+\ldots +a_n p_n\,|\, a_k^2+\ldots +a_n^2=1, a_k>0\},
\end{align*}
see Figure~\ref{fig:cup}. The stable and unstable manifolds are transverse, hence the system is of Morse-Smale type and the Morse cohomology is well-defined. For two critical points with consecutive indices we find that $W(\pm p_k,(\pm)'p_{k+1})$ has one connected component. This allows us to compute the Morse cohomology. Recall that we work over $\mZ/2\mZ$ so there are no choices of orientation involved.\\
The Morse cohomological complex of $f$ is generated by the two generators $\eta^{p_k}$ and $\eta^{-p_k}$ in each degree $0\leq k\leq n$, and the differential is $\delta \eta^{p_k}=\eta^{p_{k+1}}+\eta^{-p_{k-1}}$. The cohomology is non-trivial only in degree $n$ and degree $0$ and is isomorphic to $\mZ_2$ (generated by $\eta^{p_0}+\eta^{-p_0}$ and $\eta^{p_n}$ respectively). Thus there is no interesting cup product.\\  
The antipodal action $x\mapsto -x$ is a free discrete and proper action with quotient $\mR\mP^n$. The function $f$ and the metric on $S^n$ are invariant under this action and hence descend to $\mR\mP^n$. Henceforth we also denote by $f$ the function on the quotient. The function $f$ has a unique critical point $x_k=[p_k]$ for each index $0\leq k\leq n$, as  $[p_k]=[-p_k]$. The differential of $\delta$ is trivial as there are two connecting orbits between critical points of consecutive index, and we work over $\mZ/2\mZ$. Consequently, the Morse cohomology groups are isomorphic to $\mZ/2\mZ$ for each degree $0\leq i\leq n$.\\
Let us now compute the cup-product. For this we need to work with two different Morse functions. Let $R^\beta$ and $R^\alpha$ be two different symmetric matrices that are in general position, i.e., there is no linear dependence between less than $n$ eigenvectors of $R^\beta$ and $R^\alpha$ simultaneously. Let $f^\beta([x])=\frac 12\langle x,R^\beta x\rangle$ and $f^\alpha([x])=\frac 12\langle x,R^\alpha x\rangle$ two Morse functions on $\mR \mP^n$ as above. The Morse cohomologies are generated by the critical points $[p^\beta_i]$ and $[p^\alpha_i]$. 

We are interested in the cup product $\smallsmile:HM^k(\cQ^\alpha)\times HM^l(\cQ^\beta)\rightarrow HM^{k+l}(\cQ^\alpha)$. For this we need to compute the intersection (in $\mR\mP^n$)
\[
W^u([p^\alpha_{k+l}];\cQ^\alpha)\cap W^s([p^\alpha_k];\cQ^\alpha)\cap W^s([p^\beta_l];\cQ^\beta).
\]
This is the projectivization of the intersection of linear subspaces
\[\mathrm{span}\{p_k^\alpha,\ldots p_{k+l}^\alpha\}\quad\text{and}\quad \mathrm{span}\{p_l^\beta,\ldots p_{n}^\beta\}\] of $\mR^{n+1}$. But under the genericity assumption on the eigenspaces of $R^\alpha$ and $R^\beta$,  the intersection is a one dimensional linear subspace and hence its projectivization consists of a single point, cf.~Figure~\ref{fig:sphere}. We find that $\eta^{[p^\alpha_k]}\smallsmile \eta^{[p^\beta_l]}=\eta^{[p^\alpha_{k+l}]}$, which determines all the cup products and shows that the (absolute) cup length is $n+1$.

Now let $F^\alpha,F^\beta,F^\gamma:\mR\mP^n\times \mR\rightarrow \mR$ be given by $F^\alpha(x,y)=f^\alpha(x)-y^2$, $F^\beta(x,y)=f^\beta(x)+(y-\frac{1}{2})^2$, and $F^\gamma(x,y)=f^\beta(x)-(y-\frac{1}{2})^2$. Each of these functions has exactly one critical point of index $k$ and these are $x^\alpha_{k}=([p^\alpha_{k-1}],0)$, $x^\beta_{k}=([p^\beta_k],\frac{1}{2})$ and $x^\gamma_k=([p^\alpha_{k-1},\frac{1}{2})$. Note the shift in the index. Let $N=\mR\mP^n\times[-1,1]$.  Fix the product metric $g$ on $\mR\mP^n\times \mR$. Let $\cQ^\alpha=(N, F^\alpha,g,\ko^\alpha), \cQ^\beta=(N, F^\beta,g,\ko^\beta)$ and $\cQ^\gamma=(N, F^\gamma,g,\ko^\gamma)$. Then the Morse cohomology $HM^*(\cQ^\beta)$ is $\mZ_2$ generated by $\eta^{x^\beta_k}$ for each $0\leq k\leq n$, and the Morse cohomologies $HM^*(\cQ^\alpha)$ and $HM^*(\cQ^\gamma)$ are generated by $\eta^{x^\alpha_k}$ and $\eta^{x^\gamma_k}$ for each $1\leq k\leq n+1$.

The relative cup length can be computed as follows. The intersection
\[
W^u(x^\alpha_{k+l};\cQ^\alpha)\cap W^s(x^\alpha_{k};\cQ^\alpha)\cap W^s(x^\beta_{l};\cQ^\beta)
\]
is equal to the projectivization in the first coordinate of
\[\mathrm{span}\{p_k^\alpha,\ldots p_{k+l}^\alpha\}\times\{0\},\quad\text{and}\quad \mathrm{span}\{p_l^\beta,\ldots p_{n}^\beta\}\times\{0\}
\]
which consists of one point. Thus we find that $\eta^{x^\alpha_k}\smallsmile \eta^{x^\beta_l}=\eta^{x^\alpha_{k+l}}$, and the relative cup-length is $Y(\cQ^\alpha)=n+1$.\\
The absolute cup length vanishes however. Indeed, the intersection 
\[
W^u(x^\alpha_{k+l};\cQ^\alpha)\cap W^s(x^\alpha_{k};\cQ^\alpha)\cap W^s(x^\gamma_{l};\cQ^\gamma)
\]
is empty as $W^u(x^\alpha_{k+l};\cQ^\alpha)\cap W^s(x^\alpha_{k};\cQ^\alpha)$ is contained in $\mR\mP^n\times\{0\}$ while $W^s(x^\gamma_{l};\cQ^\gamma)$ is contained in $\mR\mP^n\times\{\frac{1}{2}\}$. Thus the cup product $\smallsmile:HM^k(\cQ^\alpha)\times HM^l(\cQ^\gamma)\rightarrow HM^{k+l}(\cQ^\alpha)$ is zero and consequently the (absolute) cup-length is zero as well. 

\begin{figure}
  \includegraphics[width=.5\textwidth]{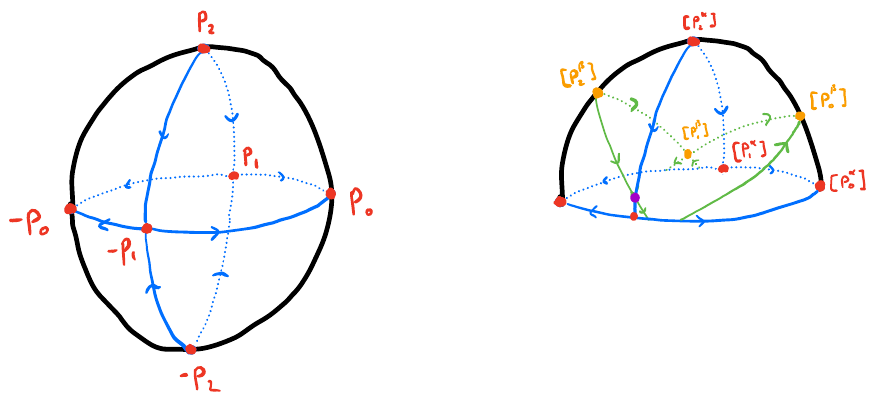}
  \caption{On the left is a sketch of the gradient flow of $f(x)=\frac{1}{2}\langle x, R x \rangle$ on the two sphere depicted. On the right the gradient flow of the projectivization of two such Morse functions on $\mR\mP^2$ is shown. The intersection \[W^u([p_2^\alpha] ;\cQ^\alpha)\cap W^s([p_1^\alpha];\cQ^\alpha)\cap W^s([p_1^\beta];\cQ^\beta)\] consists of a single point drawn in purple. This shows that $\eta^{[p_1^\alpha]}\smallsmile \eta^{[p_1^\beta]}=\eta^{[p_2^\alpha]}.$}
  \label{fig:sphere}
\end{figure}


\section{Appendix: Orientations and multiple intersections.}
\label{sec:appendix}

The aim of this appendix is to fix the orientation conventions used in this paper. Let us first consider a short exact sequence of finite dimensional vector spaces
\[
  0\rightarrow U\xrightarrow{i} V\xrightarrow{j} W\rightarrow 0. 
\]
If two of the three vector spaces $U,V$ and $W$ are oriented, this short exact sequence automatically orients the third. Let us see this explicitly in the case that $V$ and $W$ are oriented. Let $w_1,\ldots w_l$ be a basis of $W$ inducing the orientation of $W$. The orientation on $U$ is then defined to be as follows. Let $v_1,\ldots ,v_l\in V$ such that $j(v_i)=w_i$. A basis $u_1,\ldots ,u_k$ of $U$ is compatible with the induced orientation if $i(u_1),\ldots ,i(u_k),v_1,\ldots ,v_l$ is a basis compatible with the orientation of $V$.

Now if $X,Y$ are submanifolds in general position of a manifold $M$ and $X$ is oriented and $Y$ is cooriented, i.e. the normal bundle is oriented, then the intersection $X\cap Y$ is automatically oriented by the short exact sequence $0\rightarrow T(X\cap Y)\rightarrow TX\rightarrow TX/TY\rightarrow 0$ of vector bundles. Note that this orientation convention does not require that $M$ is oriented. Intersections of orientable submanifolds need not be orientable. Indeed, two copies of $\mR\mP^3\subset \mR\mP^4$ in general position intersect in an $\mR \mP^2$ which is not orientable. However $\mR\mP^3$ is orientable.

The story for multiple intersections is analogous. Submanifolds $X_1,\ldots, X_n\subset M$ are \emph{mutually transverse}, or in \emph{general position}, if for each $x\in \bigcap_{i=1}^n X_i$ the diagonal map
$\Delta:T_xM\rightarrow \oplus_{i=1}^nT_xM/T_xX_i$ that sends $y\in T_xM$ to $([y]_{T_xM/T_xX_1},\ldots, [y]_{T_xM/T_xX_n})$ is surjective. This symmetric condition implies that any $X_i$ is transverse to the intersection of any combination of the other $X_j$'s, which can be easily verified. The intersection of an oriented $X$ and a number of cooriented manifolds $Y_1,\ldots,Y_k$ in general position then automatically get oriented by viewing
$
  X\cap Y_1\cap \ldots \cap Y_k=\left(\left(\ldots\left(\left(X\cap Y_1\right)\cap Y_2\right)\ldots \right)\cap Y_k\right)
$ and using the convention above. With this orientation convention we have $X\cap Y_1\cap Y_2=(-1)^{\codim Y_1 \codim Y_2}X\cap Y_2\cap Y_1$.

Finally, if $\mR$ acts properly and freely on an oriented manifold $M$ then $M/\mR$ is oriented by the short exact sequence $0\rightarrow T\mR\rightarrow TM\rightarrow T M/\mR\rightarrow 0$, where $T\mR$ is canonically oriented in the positive direction.

\thebibliography{999999}

\bibitem[AS]{AS} A. Abbondandolo, M. Schwarz, \textbf{Floer homology of cotangent bundles and the loop product}, Geometry \& Topology (2010), 14(3), 1569--1722

\bibitem[AH]{AH} P. Albers, D. Hein, \textbf{Cuplength estimates in Morse cohomology}, J. Topol. Anal. (2016), 8(2) 243--272

\bibitem[DzGU]{KGUss} Z. Dzedzej, K. G\c{e}ba, W. Uss, \textbf{The Conley index, cup-length and bifurcation}, Journal of Fixed Point Theory and Applications (2011), 10(2), 233--252

\bibitem[Hi]{Hi} M.W. Hirsch, \textbf{On imbedding differentiable manifolds in euclidean space}, Ann. of Math.  (1961), 73, 566--571

\bibitem[IRSV]{IRSV} M. Izydorek, T.O. Rot, M. Starostka, M. Styborski, R.C.  Vandervorst, \textbf{Homotopy invariance of the Conley index and local Morse homology in Hilbert spaces}, Journal of Differential Equations (2017), 263(11), 7162--7186

\bibitem[KM]{KM} P. Kronheimer, T. Mrowka, \textbf{Monopoles and three-manifolds,} Cambridge University Press, 2007, ISBN: 978-0-521-88022-0

\bibitem[R]{R} T.O. Rot, \textbf{Morse-Conley-Floer Homology}, 2014, ISBN 978-94-6259-399-2

\bibitem[RV]{RV} T.O. Rot, R.C. Vandervorst, \textbf{Morse-Conley-Floer homology}, Journal of Topology and Analysis (2014), 6(3), 305--338

  \bibitem[RV2]{RV2} T.O. Rot, R.C. Vandervorst, \textbf{Functoriality and duality in Morse-Conley-Floer homology}, J. Fixed Point Theory Appl. (2014), 16(1-2), 437–-476 

\bibitem[StWa]{StWa} M. Starostka, N. Waterstraat, \textbf{The E-Cohomological Conley Index, Cup-Lengths and the Arnold Conjecture on $T^{2n}$}, Advanced Nonlinear Studies (2019), 19(3), 519--528
\newpage    
\vspace*{1.3cm}

\begin{minipage}{1.2\textwidth}
\begin{minipage}{0.4\textwidth}
Thomas O. Rot\\
Department of Mathematics\\
Vrije Universiteit Amsterdam\\
De Boelelaan 1111\\
1081 HV Amsterdam,\\
The Netherlands\\
t.o.rot@vu.nl\\\\
Nils Waterstraat\\
Martin-Luther-Universit\"at Halle-Wittenberg\\
Naturwissenschaftliche Fakult\"at II\\
Institut f\"ur Mathematik\\
06099 Halle (Saale)\\
Germany\\
nils.waterstraat@mathematik.uni-halle.de

\end{minipage}
\hfill
\begin{minipage}{0.6\textwidth}
Maciej Starostka\\
Martin-Luther-Universit\"at Halle-Wittenberg\\
Naturwissenschaftliche Fakult\"at II\\
Institut f\"ur Mathematik\\
06099 Halle (Saale)\\
Germany\\
and\\
Institute of Applied Mathematics\\
Faculty of Applied Physics and Mathematics\\
Gda\'{n}sk University of Technology\\
Narutowicza 11/12, 80-233 Gda\'{n}sk, Poland\\
maciejstarostka@gmail.com

\end{minipage}
\end{minipage}

\end{document}